\tikzset{mytext/.style={font=\small, text=black}}
\tikzset{main node/.style={circle,fill=lime!30,draw,minimum size=0.5cm,inner sep=0pt},
            }
\def\BALL[#1](#2){\rput[t](#2){}%
        \pscircle[fillstyle=solid,fillcolor=#1!40](#2){5pt}}
\newtheorem{Theorem}{Theorem}
\newtheorem{Conjecture}{Conjecture}
\newtheorem{Corollary}[Conjecture]{Corollary}
\newtheorem{proposition}{Proposition}[section]
\newtheorem{lemma}[proposition]{Lemma}
\newtheorem{corollary}[proposition]{Corollary}
\newtheorem{theorem}[proposition]{Theorem}
\theoremstyle{definition}
\newtheorem{remark}[proposition]{Remark}
\numberwithin{equation}{section}
\title{Arboreal representations of linear groups}
\author{Jorge Fariña-Asategui}
\address{Jorge Fariña-Asategui: Centre for Mathematical Sciences, Lund University, 223 62 Lund, Sweden -- Department of Mathematics, University of the Basque Country UPV/EHU, 48080 Bilbao, Spain}
\email{jorge.farina\_asategui@math.lu.se}
\keywords{Hausdorff dimension, arboreal representations, linear groups over integral domains, weakly branch groups, non-commuting representations of graphs}
\subjclass[2020]{Primary: 20E08, 28A78, 05C25; Secondary: 20F65, 20G25, 20E18}
\thanks{The author is supported by the Spanish Government, grant PID2020-117281GB-I00, partly with FEDER funds and by the Walter Gyllenberg Foundation from the Royal Physiographic Society of Lund.}
\begin{document}

\begin{abstract}
Abért and Virág conjectured in 2005 that any embedding of a linear group over a pro-$p$ domain into the group of $p$-adic automorphisms $W_p$ should be zero-dimensional. We prove their conjecture in greater generality, namely for embeddings of linear groups over any integral domain into the automorphism group of a bounded rooted tree.
\end{abstract}

\maketitle

\section{introduction}
\label{section: introduction}

Let $G$ be a profinite group admitting a filtration of open normal subgroups
$$G=G_0\ge G_1\ge G_2\ge \dotsb \ge G_n\ge \dotsb \ge \bigcap_{n\ge 0} G_n=1.$$
This filtration endows $G$ with a natural metric $d:G\times G\to [0,\infty)$ given by
$$d(g,h):=\inf_{n\ge 0}\{|G:G_n|^{-1}\mid gh^{-1}\in G_n\}.$$
We may define a Hausdorff dimension $\mathrm{hdim}_G(\cdot)$ for the Borel subsets of the metric space $(G,d)$. If $H\le_c G$ is a closed subgroup, then its Hausdorff dimension coincides with its lower-box dimension by \cite[Theorem 2.4]{BarneaShalev}. In other words
\begin{align}
    \label{align: lower box dimension}
    \mathrm{hdim}_G(H)=\liminf_{n\to\infty}\frac{\log|HG_n:G_n|}{\log|G:G_n|}=\liminf_{n\to\infty}\frac{\log|H:H\cap G_n|}{\log|G:G_n|}.
\end{align}
If the limit in \cref{align: lower box dimension} exists, we say that $H$ has \textit{strong Hausdorff dimension} in $G$.

A classical example fitting into the above description is the automorphism group $\mathrm{Aut}~T$ of a spherically homogeneous rooted tree $T$. In this case, a filtration of open normal subgroups is given by the \textit{level-stabilizers}, where the $n$th level stabilizer~$\mathrm{St}(n)$ is simply the pointwise stabilizer of the vertices at the $n$th level $\mathcal{L}_n$ of~$T$.

If $T$ is the $p$-adic tree, we write $W_p$ for a Sylow pro-$p$ subgroup of $\mathrm{Aut}~T$, and call it the \textit{group of $p$-adic automorphisms}. Note that $W_p$ is simply the iterated wreath product corresponding to a $p$-cycle in $\mathrm{Sym}(p)$. In their seminal paper \cite{AbertVirag}, Abért and Virág made the following remarkable conjecture \cite[Conjecture 8.4]{AbertVirag}:

\begin{Conjecture}[Abért and Virág 2005]
\label{conjecture: linear groups}
    Let $R$ be a commutative pro-$p$ ring and let $G\le \mathrm{GL}_n(R)$ be a linear group over $R$. Then, for any embedding of $G$ into $W_p$ the image of $G$ has zero Hausdorff dimension.
\end{Conjecture}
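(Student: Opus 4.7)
\emph{Proof plan.} My approach would exploit a dichotomy between the combinatorial richness forced by positive Hausdorff dimension on a subgroup of $\mathrm{Aut}~T$ and the structural poverty of linear groups over integral domains. I would aim directly at the greater generality announced in the abstract, where $T$ is an arbitrary bounded rooted tree and $R$ is an arbitrary integral domain, since this permits replacing $R$ by its fraction field $K$ and deploying the representation theory of $\mathrm{GL}_m(K)$.

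\emph{Step 1: from positive Hausdorff dimension to commuting non-abelian families.} The plan is to show that whenever $H\le \mathrm{Aut}~T$ satisfies $\mathrm{hdim}(H)>0$, for every $k$ the group $H$ contains $k$ pairwise commuting non-abelian subgroups $N_1,\ldots,N_k$; in the terminology suggested by the paper's keywords, $H$ admits \emph{non-commuting representations} of arbitrarily large disjoint unions of edges. The mechanism is that rigid stabilizers of vertices at the same level of $T$ commute across disjoint subtrees, while \eqref{align: lower box dimension} forces $|H\cap\mathrm{St}(n):H\cap\mathrm{St}(n+1)|$ to be large on a positive proportion of levels. A pigeonhole on levels should then exhibit, at arbitrarily high levels, many vertices $v$ whose ``local piece'' of $H$ is non-abelian, and the commuting non-abelian families can be read off.

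\emph{Step 2: ruling out these families in linear groups.} Given $k$ pairwise commuting non-abelian subgroups $N_1,\ldots,N_k$ of $\mathrm{GL}_m(R)\le \mathrm{GL}_m(K)$, pick $a_i,b_i\in N_i$ with $[a_i,b_i]\ne 1$. The $K$-subalgebras $A_i\subset \mathrm{M}_m(K)$ they generate pairwise commute and are non-commutative. A standard representation-theoretic argument on the joint action of $A_1,\ldots,A_k$ on $K^m$ (via joint isotypical decompositions and centralizer bounds) caps $k$ by a function of $m$ alone. Taking Step 1 with $k$ strictly larger than this cap contradicts $\mathrm{hdim}(H)>0$, so the image of $G$ in $W_p$ must have zero Hausdorff dimension.

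\emph{Main obstacle.} The genuinely delicate step is Step 1: converting the analytic hypothesis on lower-box growth into honest non-commutativity inside the rigid stabilizers of $H$ rather than merely inside its profinite closure. I anticipate an inductive level-by-level argument where a positive proportion of abelian ``local pieces'' at level $n$ would cap the multiplicative growth $|H\cap\mathrm{St}(n):H\cap\mathrm{St}(n+1)|$ by a factor incompatible with $\mathrm{hdim}(H)>0$. The boundedness of $T$ is essential throughout, so that the local alphabet at each vertex is uniform and the pigeonhole estimates remain quantitative.
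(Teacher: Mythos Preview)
Your high-level strategy matches the paper's: both argue that positive dimension forces non-commuting representations of $V_n$ for all $n$, while linearity over an integral domain caps $n$. Step~2 is fine; passing to the fraction field $K$ and invoking Ab\'ert's bound for $\mathrm{GL}_m(K)$ is a valid reduction, and the paper's \cref{lemma: minimal number of representations} does essentially this directly over $R$.

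The genuine gap is in Step~1, and you have misdiagnosed the obstacle. The difficulty is not profinite closure: even for a \emph{closed} level-transitive $H\le_c\mathrm{Aut}~T$ of positive dimension, the rigid stabilizers $\mathrm{rist}_H(v)$ can all be trivial from some level onward---such examples are constructed in \cite{QuestionAbertVirag} and are flagged at the opening of Section~2.2. Your pigeonhole ``abelian local pieces cap the growth'' therefore fails if the local piece is $\mathrm{rist}_H(v)$; and if you instead mean the projection $H_v=\varphi_v(\mathrm{st}_H(v))$, that is a quotient, not a subgroup of $H$, so non-abelian $H_v$'s at disjoint vertices do not produce commuting non-abelian subgroups \emph{inside} $H$. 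The paper's fix has two ingredients you are missing. First, \cref{proposition: rists of positive dimensional subgroups} runs a growth-capping argument on the \emph{projections} to show that almost all $H_v$ are weakly branch. Second---and this is the key idea absent from your plan---the Lifting Lemma (\cref{lemma: lifting lemma}) exploits the subdirect embedding $\mathrm{St}_H(N)\hookrightarrow\prod_{v\in\mathcal{L}_N}H_v$ to extract a non-trivial normal subgroup of one projection whose elements lift \emph{uniquely} to $H$; uniqueness ensures the lifts commute if and only if the originals do, and this is precisely what transports a non-commuting representation of $V_n$ from the weakly branch projection back into $H$ itself.
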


\cref{conjecture: linear groups} has been open for two decades, even for local fields. The main goal of this paper is to establish its veracity. In fact, we prove a stronger result.

A rooted tree $T$ is said to be \textit{bounded} if the vertices at the same level of $T$, all have the same number of immediate descendants and this number is at most $M$ for every level, for some $M\ge 2$. For us, a ring $R$ is an \textit{integral domain} if it is commutative, it has a multiplicative identity and it has no non-trivial zero-divisors (i.e. the zero ideal is prime). The main result of this paper reads as follows:

\begin{Theorem}
\label{Theorem: linear groups are zero-dimensional}
    Let $R$ be an integral domain  and let $G\le \mathrm{GL}_n(R)$ be a linear group over $R$. Let $T$ be a bounded tree. Then, for any embedding of $G$ into $\mathrm{Aut}~T$ the image of $G$ has strong zero Hausdorff dimension.
\end{Theorem}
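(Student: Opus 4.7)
My strategy is to analyze the finite quotients $G[n] := G/(G \cap \st(n))$ and exploit the tension that $G[n]$ is simultaneously a finite permutation group on the level $\mathcal{L}_n$ of $T$ and a finite quotient of a linear group of \emph{fixed} degree $n$ over $R$. A first reduction is to assume $R = K$ is a field: any integral domain embeds in its field of fractions. For a bounded tree with branching at most $M$, $\log |\Aut\, T : \st(n)|$ is comparable to $|\mathcal{L}_n|$, so strong zero Hausdorff dimension amounts to
$$\lim_{n\to\infty}\frac{\log|G[n]|}{|\mathcal{L}_n|}=0.$$

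The main tool I would invoke is a Jordan-type structure theorem for finite linear groups: Jordan's classical theorem in characteristic zero, and a Larsen--Pink type decomposition in positive characteristic. Since $G[n]$ is a quotient of a linear group of fixed degree $n$, each $G[n]$ admits a normal subgroup of index bounded by a constant $J=J(n)$ depending only on $n$, whose structure is (up to a normal $p$-subgroup in characteristic $p$) an extension of an abelian group by a central product of finite simple groups of Lie type of rank at most $n-1$. Crucially, $J$ is \emph{uniform across levels}, which is what would allow upgrading a $\liminf=0$ statement to a $\lim=0$ statement, yielding strong Hausdorff dimension zero rather than merely Hausdorff dimension zero. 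Bounding the semisimple contribution is then straightforward: a faithful permutation representation of a simple group of Lie type of rank $r$ on $N$ points forces its order to be at most $N^{O(r)}$, and with $r \leq n-1$ bounded, this gives log-order $O(\log |\mathcal{L}_n|)$.

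The main obstacle I expect is the abelian (and, in characteristic $p$, the normal $p$-subgroup) part. A priori an abelian subgroup of $\mathrm{Sym}(|\mathcal{L}_n|)$ can have order up to $\exp(\Theta(|\mathcal{L}_n|))$, so to close the argument one must leverage the linear-group origin rather than abstract permutation bounds. The key lemma I would aim for is: any subgroup of $\Aut\, T$ arising from a finitely generated abelian (or, more generally, nilpotent of bounded class) linear group of $K$-degree at most $n$ has level-$n$ images of order at most polynomial in $n$. I would prove this by combining the structure of finitely generated abelian subgroups of $\mathrm{GL}_n(K)$ with a direct combinatorial analysis of how commuting automorphisms of a rooted tree decompose orbits at successive levels, the main point being that the number of generators and the eigencomponent structure inherited from the linear representation constrain the orbit lengths severely. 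Assembling the Jordan decomposition with these bounds yields $\log |G[n]|$ polynomial in $n$, while $|\mathcal{L}_n|$ grows at least geometrically in $n$, so the limit above vanishes.
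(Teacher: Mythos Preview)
Your approach has a fundamental gap at its core: Jordan's theorem and the Larsen--Pink structure theorem apply to finite \emph{subgroups} of $\mathrm{GL}_k(K)$, not to finite \emph{quotients} of (possibly infinite) linear groups. The sentence ``Since $G[n]$ is a quotient of a linear group of fixed degree $n$, each $G[n]$ admits a normal subgroup of index bounded by a constant $J=J(n)$\ldots'' is simply false. A non-abelian free group is linear of degree~$2$ (e.g.\ Sanov's subgroup of $\mathrm{SL}_2(\mathbb{Z})$), yet every finite group occurs as a quotient of a free group, so the quotients $G[n]$ carry no structural constraint whatsoever. This is not a repairable technicality: Ab\'ert and Vir\'ag themselves show that a free group on three generators embeds densely in $W_p$, so its level quotients are exactly those of $W_p$. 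Any argument that reads only the tower $\{G[n]\}$ cannot distinguish this free (hence linear) group from $W_p$. The same objection kills your treatment of the ``abelian part'': that piece arises as a subquotient, not a subgroup, of $G$, so its ``linear-group origin'' gives you nothing to leverage. (Separately, you overload the symbol $n$ for both the matrix size and the tree level, which makes several statements ambiguous.)

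The paper avoids this trap by working with the subgroup structure of $G$ rather than its quotients. It first shows (\cref{proposition: rists of positive dimensional subgroups}) that for a positive-dimensional level-transitive $G$, almost all projections $G_v$ are weakly branch; it then uses Ab\'ert's non-commuting graph representations: a weakly branch group contains, for every $n$, a system of $2n$ elements realizing the graph $V_n$ (i.e.\ $n$ pairwise-commuting non-abelian $2$-generated subgroups), while a subgroup of $\mathrm{GL}_k(R)$ over an integral domain cannot realize $V_n$ once $n>k^2$ (\cref{lemma: minimal number of representations}). The remaining work is a lifting lemma that transports such a configuration from a projection $G_{v}$ back into $G$ itself. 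The point is that these $2n$ elements live \emph{inside} $G$, so the linear constraint on $G$ bites directly---precisely what your quotient-based strategy cannot achieve.
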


In particular, when $R$ is taken to be a pro-$p$ domain (i.e. a complete, Noetherian, local domain $R$ with finite residue field of characteristic~$p$) and $T$ the $p$-adic tree in \cref{Theorem: linear groups are zero-dimensional}, one obtains \cref{conjecture: linear groups}.

The proof of \cref{Theorem: linear groups are zero-dimensional} consists of three steps. First, we show that almost all the projections on the tree of a positive-dimensional subgroup of $\mathrm{Aut}~T$ are weakly branch; see \cref{proposition: rists of positive dimensional subgroups}. Secondly, we follow the approach of Abért in \cite{AbertGraph} and study the non-commuting representations of graphs in linear groups over a general integral domain. Lastly, we prove a lifting property, which allows us to lift a non-commuting representation of a graph from a projection of a group on the tree to the original group.

We believe that the individual tools developed in some of these steps are of independent interest. Let $G$ be a topological group. By an \textit{arboreal representation} of $G$ on a bounded rooted tree~$T$ we shall mean a continuous group homomorphism $\rho:G\to \mathrm{Aut}~T$. In \cref{section: structural properties}, we use \cref{proposition: rists of positive dimensional subgroups} to prove that groups satisfying a law do not admit positive-dimensional arboreal representations:

\begin{Corollary}
    \label{corollary: group laws}
    Let $G$ be a topological group satisfying a law and $T$ a bounded tree. Then, for any arboreal representation of $G$ on $T$, the image of $G$ has strong zero Hausdorff dimension.
\end{Corollary}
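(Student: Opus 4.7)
The plan is to argue by contradiction, combining \cref{proposition: rists of positive dimensional subgroups} with the fact (due to Abért) that no weakly branch group satisfies a non-trivial group identity. Assume that $\rho:G\to \mathrm{Aut}~T$ is an arboreal representation whose image fails to have strong zero Hausdorff dimension, and replace $\rho(G)$ by its closure $H:=\overline{\rho(G)}$. Because each level stabilizer $\mathrm{St}(n)$ is open, density forces $\rho(G)\,\mathrm{St}(n)=H\,\mathrm{St}(n)$ for every~$n$, so passing to the closure does not change the strong Hausdorff dimension. In particular, $H$ is positive-dimensional in the sense required by \cref{proposition: rists of positive dimensional subgroups}.

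The second step is to propagate the law of $G$ through $H$ and down the tree. Since $G$ satisfies a law $w(x_1,\dots,x_k)=1$, so does its image $\rho(G)$. The word map $w:(\mathrm{Aut}~T)^k\to \mathrm{Aut}~T$ is continuous, so its zero set is closed in $(\mathrm{Aut}~T)^k$; as this zero set contains the dense subset $\rho(G)^k\subseteq H^k$, it contains all of $H^k$, and $H$ itself satisfies $w$. The law is then inherited by every subgroup and every quotient of $H$; in particular, every projection of $H$ on a subtree $T_v$ --- being a homomorphic image of the vertex stabilizer $\mathrm{St}_H(v)$ --- again satisfies $w$.

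The third step invokes \cref{proposition: rists of positive dimensional subgroups}: since $H$ is positive-dimensional, almost every projection of $H$ on a subtree of $T$ is weakly branch. By Abért's result that weakly branch groups do not satisfy any non-trivial identity, no such projection can satisfy $w$, contradicting the previous paragraph. Hence the original assumption fails and $\rho(G)$ has strong zero Hausdorff dimension.

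The main difficulty is the calibration in the very first step: \cref{proposition: rists of positive dimensional subgroups} is phrased in terms of positive (lower-box) Hausdorff dimension, while the failure of \emph{strong} zero Hausdorff dimension a priori only tells us that the $\limsup$ of the relevant density is positive. To apply the proposition one therefore has to pass to a subsequence along which the density is bounded below and either realise this as a closed subgroup of $H$ with positive lower-box dimension, or appeal to a subsequential variant of the proposition. Modulo this bookkeeping, the argument is essentially the syllogism ``law $+$ positive dimension $\Rightarrow$ weakly branch projection $\Rightarrow$ no law.''
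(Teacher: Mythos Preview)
Your first three paragraphs are correct and follow essentially the same route as the paper: the law passes to the image, hence to every vertex stabiliser and every projection; projections therefore cannot be weakly branch by Ab\'ert's theorem; and contraposing \cref{proposition: rists of positive dimensional subgroups} forces strong zero Hausdorff dimension. The paper's version is terser---it does not explicitly take the closure or argue continuity of the word map---but your extra care there is warranted, since \cref{proposition: rists of positive dimensional subgroups} is stated for closed subgroups.

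The ``main difficulty'' you raise in the final paragraph, however, is a phantom based on a misreading. \cref{proposition: rists of positive dimensional subgroups} hypothesises positive \emph{upper-box} dimension, not Hausdorff or lower-box dimension. Since one always has $\underline{\mathrm{dim}}_T\le \mathrm{hdim}_T\le \overline{\mathrm{dim}}_T$, the failure of strong zero Hausdorff dimension is \emph{equivalent} to $\overline{\mathrm{dim}}_T(H)>0$, which is precisely the hypothesis you need. No subsequence extraction or subsequential variant is required; delete that paragraph.

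One genuine point neither you nor the paper's proof makes explicit: \cref{proposition: rists of positive dimensional subgroups} also assumes level-transitivity, which an arbitrary arboreal representation need not have. This is handled by the Remark immediately following the proposition---without level-transitivity one still obtains, along some ray, projections with non-trivial rigid level stabilisers at every level, and Ab\'ert's theorem in \cite{Abert} applies to such groups (via their separating action on the boundary), not only to weakly branch ones.
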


A version of \cref{corollary: group laws} for self-similar groups was obtained by the author in~\cite{QuestionAbertVirag}. \cref{corollary: group laws} was proposed in \cite[page 186]{AbertVirag} as a possible generalization to the zero-dimensionality of solvable subgroups of $W_p$ proved by Abért and Virág in \cite[Theorem 5]{AbertVirag}. \cref{corollary: group laws} completely settles this problem. Further applications of \cref{proposition: rists of positive dimensional subgroups} to the subgroup structure of positive-dimensional subgroups are discussed at the end of \cref{section: structural properties}.

\cref{conjecture: linear groups} is not only relevant to group theory, it also has a strong connection with arithmetic geometry. One of the central problems in modern arithmetic geometry is a notable conjecture of Fontaine and Mazur in relation to the $p$-adic Galois representations of Galois groups of number fields. If $G_{K,S}$ denotes the maximal pro-$p$ extension of a number field $K$ unramified outside the primes above a finite set of primes $S$ (with $S$ not containing any prime above $p$) the Fontaine-Mazur conjecture \cite{FontaineMazur} says that any $p$-adic Galois representation 
$$\rho:G_{K,S}\to \mathrm{GL}_n(\mathbb{Z}_p)$$
has finite image; see \cite{Calegari,Kisin,Pan,SkinnerWiles} for some progress on this well-known conjecture. Boston proposed in \cite{Boston} a further generalization of the Fontaine-Mazur conjecture for linear representations over a pro-$p$ domain.

Recall that a \textit{just-infinite} profinite group is a profinite group whose non-trivial normal closed subgroups are all open. As discussed by Boston in \cite[Section 3]{BostonNH}, the critical cases for the Fontaine-Mazur conjecture concern precisely unramified pro-$p$ extensions of number fields with just-infinite pro-$p$ Galois groups. We have the following immediate corollary to \cref{Theorem: linear groups are zero-dimensional}, which suggests a strong relation between \cref{Theorem: linear groups are zero-dimensional} and Boston's generalization of the Fontaine-Mazur conjecture:

\begin{Corollary}
\label{corollary: just infinite}
    Let $T$ be a bounded tree and $G\le_c \mathrm{Aut}~T$ a closed just-infinite subgroup with positive Hausdorff dimension in $\mathrm{Aut}~T$. Then, for any integral domain $R$, any linear representation
    $$\rho:G\to \mathrm{GL}_n(R)$$
    has finite image.
\end{Corollary}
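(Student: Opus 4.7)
The plan is to derive the corollary directly from \cref{Theorem: linear groups are zero-dimensional}, using just-infiniteness as a dichotomy device. Since $G$ is a closed subgroup of the profinite group $\mathrm{Aut}~T$, it is itself profinite, so I interpret $\rho$ as a continuous homomorphism, as is standard for representations of profinite groups. I would begin by setting $K := \ker \rho$, which is then a closed normal subgroup of $G$.

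By the just-infinite hypothesis on $G$, only two possibilities remain: either $K$ is open in $G$, or $K = 1$. In the first case $\rho(G) \cong G/K$ is finite and there is nothing more to prove, so the whole task reduces to ruling out the second case under the standing hypothesis that $G$ has positive Hausdorff dimension in $\mathrm{Aut}~T$.

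To handle the case $K = 1$, I would observe that $\rho$ is then injective, so it realises $G$ as a linear group $\rho(G) \le \mathrm{GL}_n(R)$ over the integral domain $R$, while the original inclusion $G \hookrightarrow \mathrm{Aut}~T$ exhibits this very linear group as a subgroup of the automorphism group of the bounded tree $T$. Applying \cref{Theorem: linear groups are zero-dimensional} to this embedding forces $G$ to have strong zero Hausdorff dimension in $\mathrm{Aut}~T$, contradicting the hypothesis that its dimension is positive. Hence $K$ must in fact be open and $\rho(G)$ finite. The only delicate point in the argument is the continuity assumption on $\rho$ needed to conclude that $\ker \rho$ is closed; beyond that the proof is a purely formal consequence of \cref{Theorem: linear groups are zero-dimensional}, and I do not foresee any substantial obstacle.
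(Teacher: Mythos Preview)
Your proposal is correct and matches the paper's intended approach: the paper states this result as an ``immediate corollary'' of \cref{Theorem: linear groups are zero-dimensional} without writing out a proof, and the argument you give---using just-infiniteness to force the dichotomy $\ker\rho$ open or $\ker\rho=1$, and ruling out the latter via \cref{Theorem: linear groups are zero-dimensional}---is precisely the implicit reasoning. Your flag about continuity of $\rho$ (needed so that $\ker\rho$ is closed and the just-infinite dichotomy applies) is appropriate; the paper is silent on this point, so your caveat is in fact more careful than the original.
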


\cref{corollary: just infinite} suggests that arboreal representations may represent certain classes of groups better than classical $p$-adic Galois representations. In particular, \cref{corollary: just infinite} holds for any pro-$p$ domain; thus, Boston's generalization of the Fontaine-Mazur conjecture holds for unramified pro-$p$ extensions of number fields with just-infinite pro-$p$ Galois groups admitting a positive-dimensional arboreal representation.

Arboreal representations have been studied successfully, since the seminal work of Odoni in \cite{Odoni1, Odoni2}, for Galois groups of iterates of polynomials and rational functions; cf. \cite{Bridy, JorgeSantiFPP, JonesAMS, JonesComp, JonesLMS, Juul}. Arboreal representations may be seen as a far-reaching generalization of Galois representations on Tate modules of abelian varieties. Motivated by Serre's celebrated open image theorem for elliptic curves without complex multiplication, Jones conjectured in \cite{JonesArboreal} that the image of arboreal representations of rational functions of degree two are of finite index in $\mathrm{Aut}~T$, if none of the obvious obstructions occurs (such as the rational function being post-critically finite). Even when these obstructions occur, the image of an arboreal representation is expected to be large (in the sense that it has positive Hausdorff dimension in $\mathrm{Aut}~T$) except for powering maps and Chebyshev polynomials. In fact, the recent results of Ferraguti, Ostafe and Zannier in \cite{Ferraguti} suggest that these are indeed the only cases when the image of an arboreal representation is small.

The evidence on positive-dimensionality of arboreal representations of certain Galois groups together with \cref{Theorem: linear groups are zero-dimensional} and \cref{corollary: just infinite}, suggest that a better understanding of the arboreal representations of the Galois groups $G_{K,S}$ might provide us with new tools to tackle both the Fontaine-Mazur conjecture and Boston's generalization.

\subsection*{\textit{\textmd{Organization}}}  In \cref{section: structural properties} we prove the first step towards the proof of \cref{Theorem: linear groups are zero-dimensional} and discuss some further applications. In \cref{section: representations}, we discuss non-commuting representations of graphs into linear groups. Then, we prove a lifting result allowing us to lift non-commuting representations from the projection of a group on a subtree to the original group. Finally, we put all the previous results together and prove \cref{Theorem: linear groups are zero-dimensional}.

\subsection*{\textit{\textmd{Notation}}} We write $H\le_c G$ to denote a closed subgroup of $G$.  We write $T$ for a bounded rooted tree, $m_n$ for the number of descendants of each vertex at $\mathcal{L}_n$ and~$N_n$ for the total number of vertices at $\mathcal{L}_n$. We also write $|v|$ for the level at which a vertex $v\in T$ lies. Exponential notation will be used for group actions on $T$ and on its boundary $\partial T$. We also write $\underline{\mathrm{dim}}_T(G),\overline{\mathrm{dim}}_T(G)$ and $\mathrm{hdim}_T(G)$ for the lower-box, upper-box and Hausdorff dimension of $G\le_c \mathrm{Aut}~T$ respectively, where the subscript $T$ indicates that the corresponding dimension of $G$ is considered as a subgroup of $\mathrm{Aut}~T$. Lastly, the expression \textit{for almost all} will be employed throughout the paper to mean \textit{for all but finitely many}.

\section{Structural properties of positive-dimensional subgroups}
\label{section: structural properties}

In this section, we first fix some notation regarding group actions on rooted trees that we shall use in the rest of the paper. We then prove the first step towards establishing \cref{Theorem: linear groups are zero-dimensional}, namely that almost all the projections on the tree of a positive-dimensional subgroup are weakly branch. We conclude the section with some applications of this first result.

\subsection{Groups acting on rooted trees}

Let $T$ be a spherically homogeneous rooted tree. We shall assume that $T$ is bounded, i.e. there exists some $M\ge 2$ such that $m_k\le M$ for all $k\ge 0$. For any $v\in T$, we denote by $T_v$ the subtree rooted at the vertex $v$, which is again a spherically homogeneous bounded rooted tree. Note that~$T_v$ and $T_w$ are isomorphic as graphs when $v$ and $w$ are both at the same level, as~$T$ is spherically homogeneous. Hence, we shall write $u\in T_v=T_w$ when the same vertex $u$ (up to this isomorphism) needs to be considered in both subtrees~$T_v$ and~$T_w$. We may identify vertices in $T$ with finite words $v_1v_2\dotsb v_n$ such that $v_1\in T$ and $v_{i+1}\in T_{v_i}$ for $1\le i \le n-1$. The boundary $\partial T$ of $T$ is the set of infinite rooted paths in $T$. We shall write $v\in \gamma$ for some $\gamma\in \partial T$ if the path $\gamma$ passes though $v$.

We write $\mathrm{Aut}~T$ for the group of automorphisms of $T$ fixing the root. For a subgroup $G\le \mathrm{Aut}~T$, we write $\mathrm{St}_G(n)$ and $\mathrm{st}_G(v)$ for the pointwise stabilizer in $G$ of the $n$th level of $T$ and for the stabilizer in~$G$ of the vertex $v\in T$ respectively. The reader should note that when the subscript in the stabilizers is omitted, they refer to the stabilizers in the full automorphism group of the appropriate (sub)tree.

For each $g\in \mathrm{Aut}~T$ and $v\in T$, there exists a unique $g|_v\in \mathrm{Aut}~T_v$ such that 
$$(vw)^g=v^gw^{g|_v}$$
for any $w\in T_v$. We define the homomorphism $\varphi_v:\mathrm{st}(v)\to \mathrm{Aut}~T_v$ via $g\mapsto g|_v$. We shall write 
$$G_v:=\varphi_{v}(\mathrm{st}_G(v))$$
in the remainder of the paper to simplify notation. For each $k\ge 1$, we further define the isomorphism $\psi_k:=\prod_{v\in\mathcal{L}_k}\varphi_v:\mathrm{St}(k)\to \prod_{v\in \mathcal{L}_k} \mathrm{Aut}~T_{v}$, i.e. the isomorphism given by
$$g\mapsto (g|_{v_1},\dotsc, g|_{v_{N_k}}).$$
The isomorphism $\psi_k$ yields the following equality on the logarithmic indices
\begin{align}
\label{align: equality of log ind}
\log|\mathrm{St}(k):\mathrm{St}(k+n)|&=\sum_{v\in \mathcal{L}_k}\log|\mathrm{Aut}~T_v:\mathrm{St}(n)|=N_k\cdot\log|\mathrm{Aut}~T_v:\mathrm{St}(n)|
\end{align}
for every $n\ge 1$. Restricting the isomorphism $\psi_k$ to a subgroup $G\le \mathrm{Aut}~T$ yields an embedding
$$\psi_k:\mathrm{St}_G(k)\hookrightarrow \prod_{v\in \mathcal{L}_k}G_{v}.$$
This last embedding yields the following upper bound on the logarithmic indices
\begin{align}
\label{align: upper bound of log ind}
\log|\mathrm{St}_G(k):\mathrm{St}_G(k+n)|&\le \sum_{v\in \mathcal{L}_k}\log|G_v:\mathrm{St}_{G_v}(n)| \end{align}
for every $n\ge 1$.

The isomorphism $\psi_k$ can be lifted to an isomorphism (which we shall also denote $\psi_k$ by a slight abuse of notation) $\psi_k:\mathrm{Aut}~T\to\big(\prod_{v\in \mathcal{L}_k} \mathrm{Aut}~T_{v}\big) \rtimes (\mathrm{Aut}~T/\mathrm{St}(k))$, given by
$$g\mapsto (g|_{v_1},\dotsc, g|_{v_{N_k}})g|^k,$$
where $g|^k\in \mathrm{Aut}~T/\mathrm{St}(k)$ describes the action of $g$ on the $k$th level of $T$ by identifying the quotient $\mathrm{Aut}~T/\mathrm{St}(k)$ with a subgroup of $\mathrm{Sym}(\mathcal{L}_k)$.

For a closed subgroup $G\le_c \mathrm{Aut}~T$, its lower-box dimension $\underline{\mathrm{dim}}_T(G)$ in $\mathrm{Aut}~T$ and its upper-box dimension $\overline{\mathrm{dim}}_T(G)$ in $\mathrm{Aut}~T$ are given by
$$\underline{\mathrm{dim}}_T(G)=\liminf_{n\to\infty}\frac{\log|G:\mathrm{St}_G(n)|}{\log|\mathrm{Aut}~T:\mathrm{St}(n)|}$$
and 
$$\overline{\mathrm{dim}}_T(G)=\limsup_{n\to\infty}\frac{\log|G:\mathrm{St}_G(n)|}{\log|\mathrm{Aut}~T:\mathrm{St}(n)|}$$
respectively. Recall the standard inequalities of fractal dimensions:
$$0\le \underline{\mathrm{dim}}_T(G)\le \mathrm{hdim}_T(G)\le \overline{\mathrm{dim}}_T(G)\le 1.$$
We shall say that $G$ has \textit{strong Hausdorff dimension} in $\mathrm{Aut}~T$ if the above three fractal dimensions coincide (the reader should note that this definition coincides with the one given in the introduction). By \cite[Theorem 2.4]{BarneaShalev}, we always have the equality $\underline{\mathrm{dim}}_T(G)= \mathrm{hdim}_T(G)$.

For a subgroup $G\le \mathrm{Aut}~T$, we define for each $v\in T$ the corresponding \textit{rigid vertex stabilizer} $\mathrm{rist}_G(v)$ as the subgroup consisting of the elements in $G$ which fix every vertex not in $T_v$. Rigid vertex stabilizers of distinct vertices at the same level commute and have trivial intersection; hence, we shall define for each level $n$ the \textit{rigid level stabilizer} $\mathrm{Rist}_G(n)$ as the direct product $\mathrm{Rist}_G(n):=\prod_{v\in \mathcal{L}_n}\mathrm{rist}_G(v)$. Note that $\mathrm{Rist}_G(n)$ is a normal subgroup of $G$. A subgroup $G\le \mathrm{Aut}~T$ is said to be \textit{level-transitive} if it acts transitively on every level of $T$. A level-transitive subgroup whose rigid level stabilizers are non-trivial (of finite index in $G$) is called \textit{weakly branch} (respectively \textit{branch}). It is easy to see that the rigid stabilizers of weakly branch groups are in fact infinite.

\subsection{Projections of positive-dimensional groups}

It was shown by the author in \cite{QuestionAbertVirag} that, unless one assumes self-similarity, there are examples of level-transitive positive-dimensional subgroups of $\mathrm{Aut}~T$ which are not weakly branch. However, in the examples constructed in \cite{QuestionAbertVirag}, the projections on the first level of the tree are all weakly branch. We show that under the assumption that $T$ is bounded, all level-transitive positive-dimensional subgroups of $\mathrm{Aut}~T$ satisfy a similar property:

\begin{theorem}
    \label{proposition: rists of positive dimensional subgroups}
    Let $T$ be a bounded rooted tree and $G\le_c \mathrm{Aut}~T$ a level-transitive closed subgroup with positive upper-box dimension in $\mathrm{Aut}~T$. Then, there exists some $N\ge 1$ such that for every $n\ge N$ and every vertex $v\in \mathcal{L}_n$ the projections
    $$G_v\le \mathrm{Aut}~T_v$$
    are all weakly branch.
\end{theorem}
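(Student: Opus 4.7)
The strategy is a dichotomy plus contradiction. Since $G$ is level-transitive on $T$, a standard orbit-stabilizer argument shows each projection $G_v$ is level-transitive on $T_v$. I would then prove a \emph{descent lemma}: if $G_v$ is weakly branch in $T_v$, then $G_w$ is weakly branch in $T_w$ for every descendant $w$ of $v$. The key observation is that for $z\in T_w\setminus\{w\}$, the map $\varphi_w$ is injective on $\mathrm{rist}_{G_v}(z)$ --- any element in its kernel fixes both $T_v\setminus T_z$ and $T_w$ pointwise, hence all of $T_v$ --- and $\varphi_w(\mathrm{rist}_{G_v}(z))\subseteq\mathrm{rist}_{G_w}(z)$, so non-triviality propagates downward. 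Combined with level-transitivity (which makes all $G_v$ with $v\in\mathcal{L}_n$ pairwise conjugate, hence isomorphic), this shows the set
\begin{equation*}
A:=\{n\ge 0\,:\,G_v\text{ is weakly branch for }v\in\mathcal{L}_n\}
\end{equation*}
is upward-closed in $\mathbb{Z}_{\ge 0}$. Hence either $A\supseteq[N,\infty)$ for some $N$ and the theorem holds, or $A=\emptyset$, which I rule out below.

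Assume for contradiction $A=\emptyset$. For every $n$ there is a minimal integer $k_n\ge 1$ (depending only on $n$) with $\mathrm{Rist}_{G_v}(k_n)=1$ for $v\in\mathcal{L}_n$. The crucial dimensional inequality is: whenever $\mathrm{Rist}_{G_v}(k)=1$, no nontrivial element of $\mathrm{St}_{G_v}(k)$ is supported on a single subtree rooted at level $k$ of $T_v$, so the embedding $\psi_k|_{\mathrm{St}_{G_v}(k)}$ remains injective after omitting any single factor in its target. Rerunning the derivation of \eqref{align: upper bound of log ind} with this improved embedding yields
\begin{equation*}
\overline{\mathrm{dim}}_{T_v}(G_v)\;\le\;\frac{N_k(T_v)-1}{N_k(T_v)}\,\overline{\mathrm{dim}}_{T_{vu_0}}(G_{vu_0})
\end{equation*}
for any $u_0$ at level $k$ of $T_v$. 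Iterating along a ray $v_0=\mathrm{root}$, $v_{i+1}$ a vertex at level $k_{|v_i|}$ of $T_{v_i}$, and combining with the basic bound $\overline{\mathrm{dim}}_T(G)\le\overline{\mathrm{dim}}_{T_{v_i}}(G_{v_i})$, I conclude
\begin{equation*}
\overline{\mathrm{dim}}_T(G)\;\le\;\prod_{i=0}^{\infty}\frac{N_{k_{|v_i|}}(T_{v_i})-1}{N_{k_{|v_i|}}(T_{v_i})}.
\end{equation*}

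The main obstacle will be showing this product is zero, equivalently that $\sum_i 1/N_{k_{|v_i|}}(T_{v_i})=\infty$. Boundedness of $T$ gives $N_k(T_{v_i})\le M^k$, so divergence is automatic once the step sizes $k_{|v_i|}$ remain bounded along the iteration. To control them I would exploit the monotonicity $k_{n+1}\ge k_n-1$, obtained by running the same injectivity argument between adjacent levels (if $\mathrm{Rist}_{G_w}(k)=1$ for $w$ a child of $v$, then $\mathrm{Rist}_{G_v}(k+1)=1$), which prevents $k_n$ from fluctuating wildly. Together with the standing hypothesis $A=\emptyset$, this should force the iteration either to admit infinitely many bounded-size steps, or in the regime $k_n\to\infty$ to produce an auxiliary lower bound on $\overline{\mathrm{dim}}_{T_v}(G_v)$ coming from the nontrivial $\mathrm{Rist}_{G_v}(k_n-1)$ at intermediate levels. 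In either case one obtains $\overline{\mathrm{dim}}_T(G)=0$, contradicting positivity.
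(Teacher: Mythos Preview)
Your setup---the descent lemma, the upward-closedness of $A$ via level-transitivity, and the reduction to $A=\emptyset$ by contradiction---matches the paper's argument exactly, and your dimensional inequality (dropping one factor from $\psi_k$ when $\mathrm{Rist}_{G_v}(k)=1$) is correct and is the analogue of the paper's inequality~\textcolor{teal}{(}\ref{align: second inequality dim}\textcolor{teal}{)}. The genuine gap is at the endgame: showing your infinite product vanishes.

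Your monotonicity $k_{n+1}\ge k_n-1$ is correct but bounds $k_n$ only from \emph{below}; nothing prevents $k_n$ from growing without bound (e.g.\ $k_n=n+1$ satisfies it). In your iteration you jump from level $|v_i|$ to $|v_i|+k_{|v_i|}$, so if the $k$'s grow the relevant $N_k$ grow at least like $2^{k}$, and $\sum_i 1/N_{k_{|v_i|}}(T_{v_i})$ can easily converge. Your fallback---extracting a lower bound on $\overline{\mathrm{dim}}_{T_v}(G_v)$ from the non-triviality of $\mathrm{Rist}_{G_v}(k_n-1)$---does not work: a nontrivial (even infinite) rigid level stabilizer gives no quantitative dimension information, and in any case a \emph{lower} bound on the dimension of a projection cannot be fed back into your descending chain of \emph{upper} bounds to force the product to zero.

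The paper sidesteps this obstacle entirely by first reducing to $k=1$: it argues that along the ray $\gamma$ there must be infinitely many vertices $v$ with $\mathrm{Rist}_{G_v}(1)=1$ (equivalently, that $k_n=1$ infinitely often; otherwise one contradicts the standing assumption that no $G_v$ is weakly branch). At each such vertex the contraction factor is $(m_{|v|}-1)/m_{|v|}\le (M-1)/M$, a fixed constant strictly below $1$, so the product is dominated by $((M-1)/M)^n\to 0$ with no further estimates needed. This reduction to single-level steps is precisely the missing idea in your argument; without it the step-sizes $k_{|v_i|}$ must be controlled from above, and the tools you list do not accomplish that.
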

\begin{proof}
    First, note that if $G_v$ is weakly branch then, for any vertex $w\in T_v$, the projection $G_{vw}$ is also weakly branch. Indeed, all the projections are level-transitive by \cite[Proposition 4.2]{RestrictedSpectra} and we further have
    $$G_{vw}=\varphi_{vw}(\mathrm{st}_G(vw))=\varphi_w(\varphi_v(\mathrm{st}_G(vw)))=\varphi_w(\mathrm{st}_{G_v}(w)).$$
    Hence, for any $u\in T_{vw}$ we get
    $$\mathrm{rist}_{G_{vw}}(u)=\varphi_w(\mathrm{rist}_{G_v}(wu))\ne 1$$
    as $\varphi_w|_{\mathrm{rist}_{G_v}(w)}$ is injective.

    Let us assume by contradiction that there does not exist such a level $N$. Then, by the above observation, there must exist an infinite path $\gamma\in \partial T$ such that for every $v\in \gamma$, the projection $G_v$ is not weakly branch. In fact, by the level-transitivity of~$G$, this holds for every infinite path in $\partial T$, as for any $v\in T$ and $g\in G$ we have~$G_v$ is weakly branch if and only if $G_{v^g}$ is weakly branch.

    As the upper-box dimension does not change when passing to finite-index subgroups, for every $v\in T$ and every $k\ge 1$, we obtain the following inequality for the upper-box dimensions as a direct application of \textcolor{teal}{(}\ref{align: equality of log ind}\textcolor{teal}{)} and \textcolor{teal}{(}\ref{align: upper bound of log ind}\textcolor{teal}{)}:
    \begin{align}
        \label{align: first bound on dim}
        \overline{\mathrm{dim}}_{T_v}(G_v)&=\limsup_{n\to\infty}\frac{\log|G_v:\mathrm{St}_{G_v}(n)|}{\log|\mathrm{Aut}~T_v:\mathrm{St}(n)|}\\
        &=\limsup_{n\to\infty}\frac{\log|\mathrm{St}_{G_v}(k):\mathrm{St}_{G_v}(k+n)|}{\log|\mathrm{St}(k):\mathrm{St}(k+n)|}\nonumber\\
        &\le \limsup_{n\to\infty}\frac{\sum_{w\in \mathcal{L}_k}\log|G_{vw}:\mathrm{St}_{G_{vw}}(n)|}{N_k\cdot\log|\mathrm{Aut}~T_{vw}:\mathrm{St}(n)|}\nonumber\\
        &\le \frac{1}{N_k}\sum_{w\in \mathcal{L}_k}\limsup_{n\to\infty}\frac{\log|G_{vw}:\mathrm{St}_{G_{vw}}(n)|}{\log|\mathrm{Aut}~T_{vw}:\mathrm{St}(n)|}\nonumber\\
        &\le \frac{1}{N_k} \sum_{w\in \mathcal{L}_k}\overline{\mathrm{dim}}_{T_{vw}}(G_{vw})\nonumber\\
        &\le \max_{w\in \mathcal{L}_k}\{\overline{\mathrm{dim}}_{T_{vw}}(G_{vw})\} \nonumber\\
        &=\overline{\mathrm{dim}}_{T_{vw}}(G_{vw})\nonumber
    \end{align}
    for any $w\in\mathcal{L}_k\subseteq T_v$, as $G_v$ is level-transitive.
    
    We may assume without loss of generality that $\mathrm{Rist}_{G_v}(1)=1$ for infinitely many $v\in \gamma$, as if $\mathrm{Rist}_{G_v}(k)=1$ for some $k\ge 2$, by projecting $G_v$ further in $\gamma$ we always find a vertex $vu\in \gamma$ such that $\mathrm{Rist}_{G_{vu}}(1)=1$. 
    
    Since $\mathrm{Rist}_{G_v}(1)=1$, the embedding 
    $$\mathrm{St}_{G_v}(1)\hookrightarrow \prod_{x=1}^{m_{|v|}}\mathrm{Aut}~T_{vx},$$ 
    has a restricted image. Thus, projecting this image into the first $m_{|v|}-1$ components yields a monomorphism, i.e.
    $$\mathrm{St}_{G_v}(1)\hookrightarrow \prod_{x=1}^{m_{|v|}-1}\mathrm{Aut}~T_{vx}.$$
    Then, arguing as in \textcolor{teal}{(}\ref{align: first bound on dim}\textcolor{teal}{)}, we obtain the inequality on the upper-box dimensions:
    \begin{align}
        \label{align: second inequality dim}
        \overline{\mathrm{dim}}_{T_v}(G_v)&\le \overline{\mathrm{dim}}_{T_{vx}}(G_{vx})\cdot \left(\frac{m_{|v|}-1}{m_{|v|}}\right)
    \end{align}
    for any $x\in \mathcal{L}_1\subset T_v$. In particular, we may choose $x$ such that $vx\in \gamma$.
    
    If, for $n\ge 1$, we write $V_n\subseteq \gamma$ for those vertices in $\gamma$ up to level $n$ for which the condition $\mathrm{Rist}_{G_v}(1)=1$ is satisfied, we further obtain the inequality
    \begin{align}
        \label{align: third inequality dim}
        \overline{\mathrm{dim}}_{T}(G)\le \overline{\mathrm{dim}}_{T_{\gamma\cap \mathcal{L}_n}}(G_{\gamma\cap \mathcal{L}_n})\cdot \prod_{v\in V_n}\left(\frac{m_{|v|}-1}{m_{|v|}}\right)\le \prod_{v\in V_n}\left(\frac{m_{|v|}-1}{m_{|v|}}\right)
    \end{align}
    for every $n\ge 1$, as an application of the inequalities in \textcolor{teal}{(}\ref{align: first bound on dim}\textcolor{teal}{)} and \textcolor{teal}{(}\ref{align: second inequality dim}\textcolor{teal}{)} along the path~$\gamma$ up to level $n$. Since the condition $\mathrm{Rist}_{G_v}(1)=1$ is fulfilled for infinitely many $v\in \gamma$, by taking upper limits in the inequality in \textcolor{teal}{(}\ref{align: third inequality dim}\textcolor{teal}{)} and using that $m_{k}\le M$ for all $k\ge 1$ (as $T$ is bounded) we get
    $$\overline{\mathrm{dim}}_{T}(G)\le \limsup_{n\to\infty}\prod_{v\in V_n}\left(\frac{m_{|v|}-1}{m_{|v|}}\right)\le \lim_{n\to\infty} \left(\frac{M-1}{M}\right)^n=0.$$
    This contradicts the assumption that $G$ had positive upper-box dimension in $\mathrm{Aut}~T$ and concludes the proof.  
\end{proof}

\begin{remark}
    Level-transitivity of $G$ is only used mildly in the proof of \cref{proposition: rists of positive dimensional subgroups}. Indeed, we only use it to ensure that the projections $G_v$ are level-transitive (as level-transitive is part of the definition of weakly branch), and to obtain the result for almost all projections. If one drops level-transitivity, then the proof of \cref{proposition: rists of positive dimensional subgroups} simply yields an infinite path $\gamma\in \partial T$ such that for almost all vertices in $\gamma$, the projections $G_v$ have non-trivial rigid level stabilizers at every level. This essentially means that the induced action of $G_v$ on the boundary $\partial T_v$ is a micro-supported action; see \cite{MicroSupported1, MicroSupported2}.
\end{remark}

\subsection{Some applications}

An immediate consequence of \cref{proposition: rists of positive dimensional subgroups} is that a level-transitive closed subgroup $G\le_c \mathrm{Aut}~T$, satisfying that for infinitely many vertices $v\in T$ the projections $G_v$ are not weakly branch, must have zero upper-box dimension in $\mathrm{Aut}~T$. Equivalently, $G$ must have strong zero Hausdorff dimension in $\mathrm{Aut}~T$.

\cref{proposition: rists of positive dimensional subgroups} is enough to show that groups satisfying a law do not admit positive-dimensional arboreal representations on bounded rooted trees:

\begin{proof}[Proof of \cref{corollary: group laws}]
    If $G$ satisfies a law, any arboreal representation of $G$ satisfies a law. Let us write $\rho(G)$ for the image of an arboreal representation on a bounded rooted tree $T$. Now, any subgroup of $\rho(G)$ satisfies the same law as $G$. In particular, for every $v\in T$, the subgroup $\mathrm{St}_{\rho(G)}(v)$ satisfies a law. Furthermore, the projection $\rho(G)_v$ satisfies a law, so it cannot be weakly branch by \cite[Theorem 1]{Abert}. Therefore, by \cref{proposition: rists of positive dimensional subgroups}, the group $\rho(G)$ must have strong zero Hausdorff dimension in $\mathrm{Aut}~T$.
\end{proof}

A second consequence of \cref{proposition: rists of positive dimensional subgroups} is that a group  admitting a positive-dimensional arboreal representation on a bounded rooted tree must have finite center:

\begin{corollary}
\label{corollary: center}
    Let $G\le \mathrm{Aut}~T$ be a level-transitive subgroup whose closure in $\mathrm{Aut}~T$ has positive upper-box dimension in $\mathrm{Aut}~T$. Then the center of $G$ is finite.
\end{corollary}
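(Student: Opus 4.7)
The plan is to reduce the statement to the closure $\bar{G}$ of $G$ in $\mathrm{Aut}~T$, and then to combine \cref{proposition: rists of positive dimensional subgroups} with the classical fact that weakly branch groups have trivial center. First I would observe that $Z(G) \subseteq Z(\bar{G})$: any $z \in Z(G)$ commutes with the dense subset $G$ of $\bar{G}$, and since commutation is a closed condition in the topological group $\bar{G}$, it commutes with all of $\bar{G}$. Hence it suffices to prove that $Z(\bar{G})$ is finite. Also, $\bar{G}$ is level-transitive (the action of $G$ on each finite level $\mathcal{L}_n$ extends to $\bar{G}$, and transitivity is preserved in the finite quotient $\bar{G}/\mathrm{St}_{\bar{G}}(n)$), and it is closed with positive upper-box dimension in $\mathrm{Aut}~T$ by hypothesis.

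Next I would apply \cref{proposition: rists of positive dimensional subgroups} to $\bar{G}$ to obtain some $N \ge 1$ such that $\bar{G}_v$ is weakly branch for every $v \in \mathcal{L}_N$. The key input at this point is the well-known fact that any weakly branch subgroup $H \le \mathrm{Aut}~T$ has trivial center: if $z \in Z(H)$ and $v \in T$, pick a non-trivial $h \in \mathrm{rist}_H(v)$; its support (as an automorphism of $T$) is contained in $T_v$, and from $zh = hz$ together with the disjointness of $T_v$ and $T_{v^z}$ whenever $v^z \ne v$, one deduces that $z$ must fix $v$. Since this holds for every vertex $v \in T$, we get $z = 1$.

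Now take any $z \in Z(\bar{G}) \cap \mathrm{St}_{\bar{G}}(N)$ and fix $v \in \mathcal{L}_N$. Since $z$ fixes $v$, for any $g \in \mathrm{st}_{\bar{G}}(v)$ the identity $zg = gz$ descends under $\varphi_v$ to $(z|_v)(g|_v) = (g|_v)(z|_v)$, so $z|_v$ centralises $\bar{G}_v = \varphi_v(\mathrm{st}_{\bar{G}}(v))$. By the lemma above applied to the weakly branch group $\bar{G}_v$, this forces $z|_v = 1$. Running this over all $v \in \mathcal{L}_N$ and using the injectivity of $\psi_N$ on $\mathrm{St}(N)$ yields $z = 1$. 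Consequently $Z(\bar{G}) \cap \mathrm{St}_{\bar{G}}(N) = 1$, so $Z(\bar{G})$ injects into the finite quotient $\bar{G}/\mathrm{St}_{\bar{G}}(N) \hookrightarrow \mathrm{Sym}(\mathcal{L}_N)$. Therefore $Z(\bar{G})$, and hence $Z(G)$, is finite.

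The main substantive ingredient is the triviality of the center of a weakly branch group; everything else is routine bookkeeping with closures, level stabilisers, and the behaviour of sections $g|_v$ under multiplication when $v$ is fixed. If the paper wishes to avoid quoting the ``trivial centre'' lemma, a half-page support argument inserted at Step 3 suffices.
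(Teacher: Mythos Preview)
Your proof is correct and follows essentially the same route as the paper: reduce to the closure, invoke \cref{proposition: rists of positive dimensional subgroups} to get weakly branch projections at some level $N$, use the trivial-centre property of weakly branch groups to conclude $Z(\bar G)\cap\mathrm{St}_{\bar G}(N)=1$, and embed $Z(\bar G)$ in the finite quotient. The only noteworthy difference is mechanical: the paper takes a non-trivial $g\in\mathrm{St}_G(N)$ and produces a witness $h\in\mathrm{St}_G(N)$ with $[g,h]\ne 1$, which requires the slightly stronger statement (quoted from \cite{BartholdiHausdorff}) that every \emph{finite-index} subgroup of a weakly branch group has trivial centre, whereas your argument works with the full vertex stabiliser and hence needs only that $\bar G_v$ itself has trivial centre.
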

\begin{proof}
    Since $G$ is residually finite $G$ embeds into its closure $\overline{G}$, and the center of~$G$, which we denote $Z(G)$, is contained in the center of $\overline{G}$, as $Z(G)N/N\le Z(G/N)$ for any normal subgroup $N\trianglelefteq G$. Thus, we may assume without loss of generality that~$G$ is closed in $\mathrm{Aut}~T$.
    
    Let us assume that $n\ge N$, where $N$ is given by \cref{proposition: rists of positive dimensional subgroups}. Let $g\in \mathrm{St}_G(n)$ be non-trivial. Then, there is a vertex $v$ at the $n$th level of $T$ such that $\varphi_v(g)\ne 1$. Since the projection $G_v$ is weakly branch by \cref{proposition: rists of positive dimensional subgroups}, the proof of \cite[Proposition~2.6]{BartholdiHausdorff} shows that the center of any subgroup of finite index in $G_v$ is trivial. Thus, one can find an element $h\in \mathrm{St}_G(n)$ such that $[\varphi_v(g),\varphi_v(h)]\ne 1$. Therefore $[g,h]\ne 1$, as $\psi_n=\prod_{v\in\mathcal{L}_n}\varphi_v$ and $\psi_n$ is injective, so $g\notin Z(G)$. Hence
    $$Z(G)\cap \mathrm{St}_G(n)=1,$$
    and since $Z(G)$ is a normal subgroup of $G$ and $\mathrm{St}_G(n)$ is normal too,  we get
    $$\langle \mathrm{St}_G(n),Z(G)\rangle=\mathrm{St}_G(n)\times Z(G).$$
    In particular $Z(G)$ is finite as it embeds in the finite group $G/\mathrm{St}_G(n)$.
\end{proof}

Note that \cref{corollary: center} is sharp in the sense that there exist level-transitive closed subgroups of $\mathrm{Aut}~T$ with non-trivial finite centers and positive Hausdorff dimension; see \cite[Proposition 3.1]{QuestionAbertVirag} for such examples.

\textcolor{teal}{Corollaries} \ref{corollary: group laws} and \ref{corollary: center} are enough to show that many groups do not admit positive-dimensional arboreal representations on bounded rooted trees. If $F$ is a local field, by the work of Breuillard and Gelander in \cite{TitsAlternative}, a linear group $G\le \mathrm{GL}_n(F)$ over~$F$ satisfies a topological analogue of the Tits alternative: $G$ either contains an open solvable subgroup or a dense free subgroup. In the former, by \cref{corollary: group laws}, the group $G$ does not admit a positive-dimensional arboreal representation on a bounded rooted tree. In the latter, we shall need a different approach based on non-commuting representations of graphs, as done in the next section. However, \cref{corollary: center} already shows, for instance, that the full group $\mathrm{GL}_n(F)$ does not admit  positive-dimensional arboreal representations on a bounded rooted tree, as its center is infinite.

\section{Non-commuting representations of graphs}
\label{section: representations}

Let $(V,E)$ be an undirected graph with no loops or multiple edges. A map $f:V\to G$ from the set of vertices $V$ to a group $G$ is said to be a \textit{non-commuting representation} of $(V,E)$ in $G$, if for all $v,w\in V$ we have 
$$(v,w)\in E\quad\text{if and only if}\quad f(v)\cdot f(w)\ne f(w)\cdot f(v).$$

For $n\ge 1$, let $V_n$ be the graph consisting of $2n$ vertices where each vertex has degree exactly 1, i.e. $V_n$ is the disjoint union of $n$ copies of $V_1$, where $V_1$ consists of two vertices and a unique edge joining both vertices; see \cref{figure: graph Vn} below.

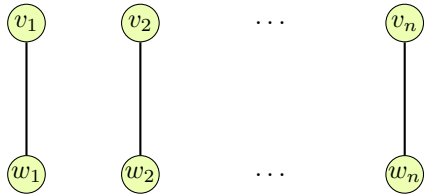
\begin{figure}[H]
        \centering
        \begin{tikzpicture}
    \node[main node] (1) {$v_1$};
    \node[main node] (2) [below = 1.5cm of 1]  {$w_1$};
    \node[main node] (3) [right = 1cm  of 1] {$v_2$};
    \node[main node] (4) [right = 1cm  of 2] {$w_2$};
    \node[main node] (5) [right = 3cm  of 3] {$v_n$};
    \node[main node] (6) [below = 1.5cm  of 5] {$w_n$};

    \path[draw,thick]
    (1) edge node {} (2)
    (3) edge node {} (4)
    (5) edge node {} (6);
\node at ($(3)!.5!(5)$) {\ldots};
\node at ($(4)!.5!(6)$) {\ldots};
\end{tikzpicture}
        \caption{The graph $V_n$.}
        \label{figure: graph Vn}
\end{figure}

\begin{remark}
    A non-commuting representation of $V_n$ in $G$ corresponds to no more than a direct product of $n$ non-abelian 2-generated subgroups in $G$. Weakly branch groups are precisely groups admitting such direct products as subgroups for every $n\ge 1$, while linear groups over a field do not admit such direct products for arbitrarily big $n\ge 1$. This was used by Abért in \cite[Corollary 7]{AbertGraph} to prove that weakly branch groups are not linear over a field. We shall extend the approach of Abért from fields to general integral domains.
\end{remark}

\subsection{Linear groups over an integral domain}

Let us fix an integral domain $R$. We shall assume in the following that $\mathrm{GL}_k(R)$ is infinite, as otherwise \cref{Theorem: linear groups are zero-dimensional} holds trivially.

Let us denote by $\mathrm{mat}_R(n)$ the minimal $k$ such that $\mathrm{GL}_k(R)$ admits a non-commuting representation of $V_n$. By \cite[Proposition 5]{AbertGraph}, this is equivalent to $\mathrm{GL}_k(R)$ admitting a non-commuting representation of any graph of size $n$, which was the original definition of $\mathrm{mat}_R(n)$ given by Abért in \cite{AbertGraph}. Abért obtained in \cite[Theorem~3]{AbertGraph} a lower bound for $\mathrm{mat}_R(n)$ when $R$ is further assumed to be a field. Here, we obtain a lower bound for $\mathrm{mat}_R(n)$ when $R$ is an integral domain, arguing as Abért in \cite[Theorem 3]{AbertGraph} (the upper bound in \cite[Theorem 3]{AbertGraph} also holds for integral domains, but since we do not need it here we omit it). Note that the lower bound we obtain is slightly better than the one stated in \cite[Theorem 3]{AbertGraph}, as instead of treating $V_n$ as a graph of size $2n$, one can first represent $V_n$ in $\mathrm{GL}_k(R)$ and later apply the equivalence in \cite[Proposition 5]{AbertGraph}. A sharp linear lower bound (for fields) has been recently obtained by Kionke and Schesler in \cite[Theorem 2]{KionkeSchesler}. For our purposes, any lower bound diverging to infinity on $n$ suffices.

\begin{lemma}
\label{lemma: minimal number of representations}
   For every integral domain $R$  we have
   $$\mathrm{mat}_R(n)\ge \sqrt{n}.$$
\end{lemma}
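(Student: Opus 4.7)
The plan is to reduce to the case where $R$ is a field and then carry out a dimension count in the matrix algebra $M_k(K)$ using a double commutator map.

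\textbf{Reduction to a field.} Since $R$ is an integral domain, the inclusion $R \hookrightarrow K$ into its field of fractions $K$ induces an embedding $\mathrm{GL}_k(R) \hookrightarrow \mathrm{GL}_k(K)$, and this embedding preserves (non-)commutation of pairs. Any non-commuting representation of $V_n$ in $\mathrm{GL}_k(R)$ therefore yields one in $\mathrm{GL}_k(K)$, so it suffices to lower-bound $\mathrm{mat}_K(n)$ for an arbitrary field $K$. This is the step where integrality is used in an essential way.

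\textbf{Setup and commutator map.} Assume $f : V_n \to \mathrm{GL}_k(K)$ is a non-commuting representation and write $A_i := f(v_i)$, $B_i := f(w_i) \in M_k(K)$ together with $D_i := [A_i, B_i] = A_i B_i - B_i A_i$. From the edges of $V_n$ we have $D_i \neq 0$ for every $i$; from the non-edges we have, whenever $i \neq j$, that $A_i$ commutes with both $A_j$ and $B_j$, and likewise for $B_i$. Consider the $K$-linear map
\begin{align*}
    \Phi : M_k(K) &\longrightarrow M_k(K)^{2n}, \\
    X &\longmapsto \bigl([A_1,X],\dots,[A_n,X],[B_1,X],\dots,[B_n,X]\bigr).
\end{align*}

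\textbf{Linear independence and conclusion.} Evaluating $\Phi$ at the distinguished matrices: by the commutation relations, every coordinate of $\Phi(A_i)$ vanishes except the $(n+i)$-th, which equals $[B_i, A_i] = -D_i$; similarly $\Phi(B_i)$ is supported only in its $i$-th coordinate, where it equals $D_i$. Since each $D_i \neq 0$ and these supports occupy $2n$ distinct coordinates, the $2n$ vectors $\Phi(A_1),\dots,\Phi(A_n),\Phi(B_1),\dots,\Phi(B_n)$ are $K$-linearly independent in $M_k(K)^{2n}$. Hence $\dim_K \mathrm{image}(\Phi) \ge 2n$, and since $\Phi$ is a linear map out of $M_k(K)$, rank-nullity forces $2n \le \dim_K M_k(K) = k^2$, giving $k \ge \sqrt{2n} \ge \sqrt{n}$.

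I do not anticipate a serious technical obstacle: once the map $\Phi$ is written down, the combinatorial structure of $V_n$ translates directly into the coordinate-wise support of $\Phi(A_i)$ and $\Phi(B_i)$, and the bound follows from a one-line rank-nullity argument. The only care needed is the initial passage from $R$ to $K$, which is precisely where the hypothesis that $R$ is an integral domain is required so that the fraction field exists and the inclusion $\mathrm{GL}_k(R) \hookrightarrow \mathrm{GL}_k(K)$ makes sense.
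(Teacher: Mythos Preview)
Your proof is correct, and in fact yields the slightly sharper bound $k^2\ge 2n$. The overall strategy---show that the representing matrices are $K$-linearly independent inside $M_k$ and compare with $\dim_K M_k(K)=k^2$---is the same as the paper's, but the execution differs. The paper stays over the integral domain $R$ throughout and argues directly: a hypothetical dependence $\lambda_j a_j=\sum_{i\ne j}\lambda_i a_i$ is multiplied on the right by $b_j$, and since $b_j$ commutes with each $a_i$ for $i\ne j$, one obtains $\lambda_j a_j b_j=\lambda_j b_j a_j$; integrality is invoked at this point to cancel $\lambda_j$, contradicting $a_jb_j\ne b_ja_j$. This gives independence of $\{a_1,\dots,a_n\}$ only, hence $n\le k^2$. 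You instead pass to the fraction field first (this is where you use integrality), and then your commutator map $\Phi$ simultaneously detects the independence of all $2n$ matrices $A_1,\dots,A_n,B_1,\dots,B_n$ via their disjoint coordinate supports, which is what buys you the extra factor of $2$. Both approaches are short; the paper's has the minor advantage of showing exactly which cancellation step requires $R$ to be a domain, while yours packages the combinatorics of $V_n$ more cleanly and recovers a marginally better constant.
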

\begin{proof}
    Note that since $R$ is commutative and unital $\mathrm{M}_k(R)$ is a free $R$-module of rank $k^2$ for every $k\ge 1$.
    
    Now, let us consider $k= \mathrm{mat}_R(n)$. Then $\mathrm{GL}_k(R)$ admits a non-commuting representation of $V_n$, i.e. we can find $a_1,b_1,\dotsc,a_n,b_n\in \mathrm{GL}_k(R)$ such that for every $1\le i,j\le n$ we have the commuting relations
    $$a_ia_j=a_ja_i\quad \text{and}\quad b_ib_j=b_jb_i,$$
    and both commuting and non-commuting relations
    $$a_ib_j= b_ja_i\quad \text{if and only if}\quad i\ne j.$$
    This choice of the $a_i$'s and the $b_i$'s guarantees that the set $\{a_1,\dotsc,a_n\}\subseteq \mathrm{M}_k(R)$ is linearly independent over $R$. In fact, assume by contradiction that the set $\{a_1,\dotsc,a_n\}$ is not linearly independent over $R$. Then, there exists $\lambda_1,\dotsc, \lambda_n\in R$ such that
    $$\lambda_j a_j=\sum_{i\ne j}\lambda_ia_i.$$
    However, then we obtain
    \begin{align*}
        \lambda_ja_jb_j&=\left(\sum_{i\ne j}\lambda_ia_i\right)b_j=\sum_{i\ne j}\lambda_ia_ib_j=\sum_{i\ne j}b_j\lambda_ia_i=b_j\left(\sum_{i\ne j}\lambda_ia_i\right)=b_j\lambda_ja_j\\
        &=\lambda_jb_ja_j,
    \end{align*}
    which yields a contradiction as
    $$\lambda_ja_jb_j=\lambda_jb_ja_j\quad\text{if and only if}\quad a_jb_j=b_ja_j$$
    since $R$ is an integral domain. Hence $\{a_1,\dotsc,a_n\}$ is linearly independent over $R$ and we get
    \begin{align*}
    \mathrm{rank}_R(\mathrm{M}_k(R))&=k^2\ge n\implies \mathrm{mat}_R(n)\ge \sqrt{n}.\qedhere
    \end{align*}
\end{proof}

\subsection{Lifing non-commuting representations}

    We would like to construct non-commuting representations of $V_n$ in level-transitive positive-dimensional groups for arbitrarily large $n\ge 1$. This is not easy to do a priori. However, by \cref{proposition: rists of positive dimensional subgroups}, we know that almost all of the projections of such a group on the tree are weakly branch, so they admit non-commuting representations of $V_n$ for arbitrarily large $n\ge 1$ by \cite[Corollary~7]{AbertGraph}. Therefore, it will be enough to find a way of lifting these non-commuting representations of $V_n$ from the projections of a group on the tree to the original group. The lifting lemma below will be the key to do so. Before stating and proving the lifting lemma, we fix some notation and terminology.
    
    For any $K\le H\le G$ chain of subgroups, we denote by $K^H\le H$ the normal closure of $K$ in $H$. Given a subgroup
    $$H\le G_1\times\dotsb \times G_k$$
    of a direct product, we say that $H$ is a \textit{subdirect product} (of $G_1\times\dotsb \times G_k$) if the projection of $H$ to each component $G_i$ is surjective.

\begin{lemma}[Lifting lemma]
    \label{lemma: lifting lemma}
    Let $H,L\le U$ be two non-trivial subgroups of some group $U$ such that for some $k\ge 0$ there exist  $h_0,h_1,\dotsc,h_k\in H$ satisfying that
    $$H\le L^{h_0}\times L^{h_1}\times\dotsb\times  L^{h_{k}},$$
    is a subdirect product. Then, there exists $i\in \{0,1,\dotsc,k\}$ maximal, for which there is a non-trivial normal subgroup $1\ne N\trianglelefteq L^{h_0}$ such that for each $a\in N$, there exist unique $b_1,\dotsc,b_{k-i}$ satisfying that
    $$(a,1,\overset{i}{\ldots},1,b_1,\dotsc,b_{k-i})\in H\le  L^{h_0}\times L^{h_1}\times\dotsb\times  L^{h_{k}}.$$
\end{lemma}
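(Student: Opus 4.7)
The plan is to identify the desired $i$ by setting up two natural filtrations of $H$ coming from the coordinate projections of the subdirect embedding, and then tracking precisely when each one becomes trivial.

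For each $i \in \{0,1,\dotsc,k\}$, set $F_i := \{h \in H : \pi_j(h) = 1 \text{ for every } 1 \le j \le i\}$, where $\pi_j$ is the $j$-th projection of $L^{h_0} \times \dotsb \times L^{h_k}$, and let $A_i := \pi_0(F_i) \le L^{h_0}$ and $B_i := \ker(\pi_0|_{F_i})$, regarded as a subgroup of $L^{h_{i+1}} \times \dotsb \times L^{h_k}$. Since $F_i$ is normal in $H$ and every element of $L^{h_0}$ lifts to $H$ by subdirectness, one verifies routinely that $A_i \trianglelefteq L^{h_0}$, the chain $A_0 \supseteq A_1 \supseteq \dotsb$ is decreasing with $A_0 = L^{h_0} \ne 1$, and $B_k = 1$. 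The key point is that for $a \in A_i$ the $\pi_0$-fiber of $a$ inside $F_i$ is a coset of $B_i$, so the existence of a non-trivial normal subgroup $N \trianglelefteq L^{h_0}$ with unique lifts at level $i$ is equivalent to $A_i \ne 1$ together with $B_i = 1$, in which case one may simply take $N = A_i$. Hence the lemma reduces to proving that $\mathcal{S} := \{i : A_i \ne 1 \text{ and } B_i = 1\}$ is non-empty, and then to defining $i := \max \mathcal{S}$ and $N := A_i$.

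To show $\mathcal{S} \ne \emptyset$ I would set $i^* := \max\{i : A_i \ne 1\}$, which is well-defined since $A_0 \ne 1$, and prove that $B_{i^*} = 1$. The case $i^* = k$ is immediate, so I may assume $i^* < k$, in which case $A_{i^*+1} = 1$ by maximality. Suppose for contradiction that some $c = (1, \overset{i^*+1}{\dotsc}, 1, c_{i^*+1}, \dotsc, c_k) \in B_{i^*}$ is non-trivial, and let $j \ge 1$ be the smallest index with $c_{i^*+j} \ne 1$. Fixing a lift $d = (a, 1, \overset{i^*}{\dotsc}, 1, d_{i^*+1}, \dotsc, d_k) \in F_{i^*}$ of some non-trivial $a \in A_{i^*}$, the strategy is to combine $c$ with $d$ via suitable products and conjugates inside $H$, using crucially the fact that the product $L^{h_0} \times \dotsb \times L^{h_k}$ is realised internally inside the common ambient group $U$, so that $H$-conjugation acts factor-wise as inner automorphisms on each factor, in order to construct an element of $F_{i^*+1}$ whose first coordinate is still $a \ne 1$. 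This will contradict $A_{i^*+1} = 1$ and therefore force $B_{i^*} = 1$.

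The main obstacle lies in executing this cancellation rigorously. The naive attempt, namely multiplying $d$ by an element of $B_{i^*}$ in order to kill $d_{i^*+1}$, only succeeds when $d_{i^*+1}$ happens to lie in $\pi_{i^*+1}(B_{i^*})$, which is a genuine restriction that need not hold a priori. Overcoming this requires truly exploiting both the assumption that the $L^{h_j}$ are conjugates of a common $L \le U$ and the assumption that the conjugating elements $h_j$ themselves belong to $H$, together with a coordinate-by-coordinate propagation that progressively rules out non-triviality of $\pi_{i^*+j}(B_{i^*})$ for each $j \ge 1$. This structural use of the ambient group $U$, rather than a purely external subdirect-product manipulation, is the delicate heart of the argument.
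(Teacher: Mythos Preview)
Your reduction to showing $\mathcal{S}=\{i:A_i\ne 1\text{ and }B_i=1\}$ is non-empty is correct and is a clean reformulation of the lemma. However, the proof is genuinely incomplete at the point you yourself flag. Your strategy is to fix $i^*=\max\{i:A_i\ne 1\}$ and then force $B_{i^*}=1$ by manipulating a non-trivial $c\in B_{i^*}$ together with a lift $d$ of some $a\in A_{i^*}$. The obstacle you describe is real and is not overcome by the vague ``coordinate-by-coordinate propagation'' you sketch: multiplying $d$ by elements of $B_{i^*}$ can only kill $d_{i^*+1}$ when it lies in $\pi_{i^*+1}(B_{i^*})$, and conjugating a general $c=(1,\dotsc,1,c_{i^*+j},\dotsc,c_k)$ by $h_{i^*+j}^{-1}$ does not produce an element whose only non-trivial coordinate is the zeroth one, because the tail $c_{i^*+j+1}\cdots c_k$ gets scrambled in $U$ rather than annihilated. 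So the contradiction with $A_{i^*+1}=1$ is never reached.

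The paper avoids this difficulty by a short induction on $k$ that approaches the problem from the opposite end. One drops the \emph{last} coordinate via $\pi:L^{h_0}\times\dotsb\times L^{h_k}\to L^{h_0}\times\dotsb\times L^{h_{k-1}}$. If $\ker\pi|_H=1$, the last coordinate is uniquely determined by the others and one simply applies the inductive hypothesis to $\pi(H)$ and lifts. If $\ker\pi|_H\ne 1$, one has a non-trivial $(1,\dotsc,1,b)\in H$ whose \emph{only} non-trivial coordinate is the last one; now the conjugation trick works cleanly: since $b\in L^{h_k}$ as an element of $U$, the conjugate $b^{h_k^{-1}}$ lies in $L=L^{h_0}$, giving $(a,1,\dotsc,1)\in H$ with $a\ne 1$, and one takes $i=k$, $N=\langle a\rangle^{L}$. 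In your language this does not prove $B_{i^*}=1$ directly; rather it shows that at $m:=\min\{i:B_i=1\}$ one has $A_m\ne 1$, which suffices. The point is that the single-coordinate element needed for the conjugation step appears naturally at the top of the induction, not at level $i^*$.
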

\begin{proof}
    By conjugating with $h_0^{-1}$, we may assume without loss of generality that $h_0=1$. For $k=0$, the statement is trivially satisfied with $i=0$ and $N=L$. Let us assume by induction that the statement holds for $k\ge 0$ and let us prove it for $k+1$. Let us consider the projection 
    $$\pi:(L\times L^{h_1}\times\dotsb \times L^{h_{k}})\times L^{h_{k+1}}\to (L\times L^{h_1}\times\dotsb\times  L^{h_{k}})$$
    restricted to $H$. Then, either
    \begin{enumerate}[\normalfont(i)]
        \item $\ker \pi|_H\ne 1$; or
        \item $\ker \pi|_H= 1$.
    \end{enumerate}
    
    If (i) holds, then there exists a non-trivial element $1\ne b\in L^{h_{k+1}}$ such that
    $$(1,\dotsc,1,b)\in H\le L\times L^{h_1}\times\dotsb \times L^{h_{k+1}},$$
    and thus also a non-trivial element $1\ne a\in L$ such that
    $$h_a:=(1,\dotsc,1,b)^{h_{k+1}^{-1}}=(a,1,\dotsc, 1)\in H\le L\times L^{h_1}\times\dotsb \times L^{h_{k+1}}.$$
    In this case, the statement holds with $i=k+1$ and $N:=\langle a\rangle^L\ne 1$. Indeed, as $H$ is a subdirect product, we have
    $$N\times 1\times\dotsb \times 1\le H.$$
    
    If (ii) holds instead, then the rightmost component is completely determined by the first $k+1$ components. Note that in this case $\pi(H)\ne 1$, as $H\ne 1$ and $\ker \pi|_H=1$. Then, we may apply the inductive hypothesis to $\pi(H)$ to obtain a maximal $i\in \{0,1,\dotsc,k\}$ and a non-trivial normal subgroup $1\ne N\le L$, such that for each $a\in N$, there exist unique $b_1,\dotsc,b_{k-i}$ satisfying that
    $$h_a:=(a,1,\overset{i}{\ldots},1,b_1,\dotsc,b_{k-i})\in \pi(H)\le L\times L^{h_1}\times\dotsb\times  L^{h_{k}}.$$
    Since $\ker\pi|_H=1$, there is a unique lift of $h_a$ to $H$, i.e. for each $a\in N$, there exists a unique $b_{k+1-i}\in L^{h_{k+1}}$ such that
    $$\pi^{-1}(h_a)=(a,1,\overset{i}{\ldots},1,b_1,\dotsc,b_{k-i},b_{k+1-i})\in H\le L\times L^{h_1}\times\dotsb\times  L^{h_{k}}\times L^{h_{k+1}}.$$
    Therefore, in this case, the statement also holds, with the same $i$ and $1\ne N\trianglelefteq L$ from the inductive hypothesis.
\end{proof}

\subsection{Positive-dimensional groups}

We shall need the following standard lemma; see for instance \cite[Lemma 4]{pro-c}. We provide a short proof for the convenience of the reader:

\begin{lemma}
\label{lemma: normal subgroups and rists}
    Let $G\le \mathrm{Aut}~T$ and let us consider a non-trivial normal subgroup $1\ne N\trianglelefteq G$. If $v\in T$ is moved by $G$, then $N\ge \mathrm{rist}_G(v)'$.
\end{lemma}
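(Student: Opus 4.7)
The plan is to take arbitrary $a, b \in \mathrm{rist}_G(v)$ and to produce $[a,b]$ explicitly as a commutator involving an element of $N$; normality of $N$ will then force $[a,b]\in N$. The fundamental tool is that whenever $v^g\neq v$ for some $g\in G$, the rigid vertex stabilizers $\mathrm{rist}_G(v)$ and $\mathrm{rist}_G(v^g)=\mathrm{rist}_G(v)^g$ act on the disjoint subtrees $T_v$ and $T_{v^g}$, and therefore centralize each other in $G$.

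The first step is to secure an element $n\in N$ with $v^n\neq v$. If some element of $N$ already moves $v$, this is immediate. Otherwise $N$ would fix the entire orbit $v^G$ pointwise; in that case one picks any $1\neq n\in N$, uses that $n$ must move some vertex of $T$, and by conjugating $n$ with a suitable element of $G$ (exploiting that $v$ is moved by $G$) coerces it into an element moving $v$, or else descends to a more convenient vertex where the argument succeeds and transfers the conclusion back. This reduction is the one delicate point.

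Once $n\in N$ with $v^n\neq v$ is in hand, the conjugate $a^n$ lies in $\mathrm{rist}_G(v^n)$ and hence centralizes $\mathrm{rist}_G(v)$. In particular $[b,a^n]=1$, and $a^n$ commutes with $[b,a^{-1}]\in\mathrm{rist}_G(v)$. Writing $[a,n]=a^{-1}a^n$ and applying the standard identity $[x,yz]=[x,z]\cdot[x,y]^z$ with $y=a^{-1}$, $z=a^n$ produces
$$[b,[a,n]] \;=\; [b,a^{-1}a^n]\;=\;[b,a^n]\cdot[b,a^{-1}]^{a^n}\;=\;1\cdot[b,a^{-1}]\;=\;[b,a^{-1}].$$
Since $[a,n]\in N$ by normality, the commutator $[b,[a,n]]$ lies in $N$, and therefore $[b,a^{-1}]\in N$. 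The elementary identity $[b,a^{-1}]=a[a,b]a^{-1}=[a,b]^{a^{-1}}$ together with normality of $N$ finally yields $[a,b]\in N$, which is what was needed.

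The step I expect to be the main obstacle is the production of an element of $N$ that moves $v$: the hypothesis only provides an element of $G$ with this property, so a short conjugation/normality argument is required to upgrade it to an element of $N$. Everything afterwards is a mechanical commutator computation exploiting the disjoint-subtrees commutation of rigid vertex stabilizers.
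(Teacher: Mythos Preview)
Your commutator computation (once $n\in N$ with $v^n\neq v$ is available) is correct and is exactly the paper's argument, which records the same fact in one line as $[a,b]=[[n,a],b]\in[[N,G],G]\le N$, using that $a^n\in\mathrm{rist}_G(v^n)$ centralizes $\mathrm{rist}_G(v)$. The ``delicate point'' you flagged, however, is a genuine gap that cannot be filled: the lemma as written is false. On the binary tree, let $A\le\mathrm{Aut}~T_0$ and $1\ne B\le\mathrm{Aut}~T_1$ act on the two subtrees below the root, with $A$ rich enough that some $v\in T_0$ is moved by $A$ and has $\mathrm{rist}_A(v)$ nonabelian (for instance $A=\mathrm{Aut}~T_0$ and $v=00$). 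Set $G=A\times B$ and $N=B\trianglelefteq G$. Then $v$ is moved by $G$, yet $\mathrm{rist}_G(v)'=\mathrm{rist}_A(v)'$ is a nontrivial subgroup of $A$ and hence is not contained in $N=B$. Your conjugation/descent idea cannot rescue this: conjugating $n\in N$ by elements of $G$ only transports the support of $n$ within its $G$-orbit, which need not meet $v$ at all.

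The paper's own proof has the same slip: it selects $g\in G$ moving $v$ and then asserts $[[g,a],b]\in[[N,G],G]$, but for $[g,a]$ to lie in $[N,G]$ one needs $g\in N$. The intended hypothesis is evidently ``$v$ is moved by $N$'' (this is how the result is stated in the cited reference and throughout the branch-group literature), and under that corrected hypothesis your argument and the paper's are complete and essentially identical.
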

\begin{proof}
    Consider $a,b\in \mathrm{rist}_G(v)$ for some vertex $v\in T$ moved by some element $g\in G$. Then
    $$[a,b]=[[g,a],b]\in [[N,G],G]\le N,$$
    since $a^g$ commutes with both $a$ and $b$, as $g$ moves $v$ so $\mathrm{rist}_G(v)\cap \mathrm{rist}_G(v^g)=1$.
\end{proof}

Now, we have all the tools that we need to prove \cref{Theorem: linear groups are zero-dimensional}. We will apply the lifting lemma to positive-dimensional level-transitive subgroups of $\mathrm{Aut}~T$ to lift a non-commuting representation of $V_n$ from a projection on the tree to the original group and prove \cref{Theorem: linear groups are zero-dimensional}:

\begin{proof}[Proof of \cref{Theorem: linear groups are zero-dimensional}]
Let $G\le_c \mathrm{Aut}~T$ be a level-transitive closed subgroup of $\mathrm{Aut}~T$ with positive upper-box dimension in $\mathrm{Aut}~T$. If $G\le \mathrm{GL}_k(R)$ for some $k\ge 1$ then~$G$ does not admit non-commuting representations of $V_n$ for $n>k^2$ by \cref{lemma: minimal number of representations}. Thus, if we show that $G$ admits a non-commuting representation of $V_n$ for every $n\ge 1$, the group $G$ cannot be linear over $R$ and \cref{Theorem: linear groups are zero-dimensional} is proved. 

If $G$ is weakly branch, then just consider a level of the tree $T$ with at least $n$ vertices and take two non-commuting elements from $n$ distinct rigid vertex stabilizers. This can be done because the rigid vertex stabilizers of a weakly branch group are not abelian by \cite[Lemma 2.17]{MaximalDominik} (they do not even satisfy any group law by \cite[Theorem 1]{Abert}) and two distinct rigid vertex stabilizers of vertices at the same level commute.

Let us assume $G$ is not weakly branch. Still, by \cref{proposition: rists of positive dimensional subgroups}, almost all the projections of $G$ on $T$ are weakly branch. Thus, it will be enough to show that one can lift these projections in such a way that the lifts commutes if and only if the projections commute.

For any $n\ge 1$, let us write $w_n:=1\overset{n}{\ldots} 1$, i.e. $w_n$ is the leftmost vertex at the $n$th level of $T$. Let $N\ge 1$ be the natural number given by \cref{proposition: rists of positive dimensional subgroups}, i.e. the natural number $N$ such that all the projections of $G$ at level~$N$ and below are weakly branch. We set 
$$H:=\psi_N(\mathrm{St}_G(N))$$
and define the subgroup $L\le \psi_N(\mathrm{St}(N))$ as
$$L:=H\cap \psi_N(\mathrm{rist}(w_N))=\psi_N(\mathrm{St}_G(N))\cap \psi_N(\mathrm{rist}(w_N)).$$
Now, by level-transitivity of $G$, we may choose $h_1,\dotsc,h_k\in G$, with $k:=N_N-1$, such that
$$w_N^{h_i}=v_i$$
for each $1\le i\le k$, where $v_1,\dotsc,v_k$ are the distinct vertices in $\mathcal{L}_N\setminus\{w_N\}$. In order to simplify notation, we shall abuse of notation and write $h_i$ for both $h_i$ and its image via~$\psi_N$. Then
$$L^{h_i}=H^{h_i}\cap \psi_N(\mathrm{rist}(w_N^{h_i}))=\psi_N(\mathrm{St}_G(N))\cap \psi_N(\mathrm{rist}(v_i))$$
for each $1\le i\le k$, and we get that
$$H\le L\times L^{h_1}\times\dotsb L^{h_k}$$
is a subdirect product. Therefore, we may apply the lifting lemma (\cref{lemma: lifting lemma}) to obtain a maximal $i\in \{0,1,\dotsc,k\}$ and a non-trivial normal subgroup $1\ne N\trianglelefteq L$ such that for each $a\in N$, there exist unique $b_1,\dotsc,b_{k-i}$ satisfying that
\begin{align}
\label{align: lift}
\psi_N(g_a)&:=(a,1,\overset{i}{\ldots},1,b_1,\dotsc,b_{k-i})\in H.
\end{align}
Since the $b_1,\dotsc,b_{k-i}$ in \cref{align: lift} are uniquely determined by $i$ and $a$, it is enough to obtain a non-commuting representation of $V_n$ in $N$. Indeed, for any pair $a,b\in N$, the elements $g_a$ and $g_b$ commute if and only if the corresponding elements $a$ and $b$ commute.

Now, note that $L$ is a finite index subgroup of $G_{w_N}$. By the choice of $w_N$, the group $G_{w_N}$ is weakly branch, so for every $v\in T_{w_N}$ we have that the rigid vertex stabilizer $\mathrm{rist}_{G_{w_N}}(v)$ is infinite. Hence  
$$\mathrm{rist}_{L}(v)=\mathrm{rist}_{G_{w_N}}(v)\cap L\ne 1.$$
Thus, for each $v\in T_{w_N}$, the subgroup $L$ moves at least one vertex in $T_v$. By \cref{lemma: normal subgroups and rists}, for each vertex $u\in T_{w_N}$ moved by $L$ we have
$$N\ge \mathrm{rist}_L(u)'.$$
Then, for any $n\ge 1$, we may find $n$ distinct vertices $v_1,\dotsc,v_n\in T_{w_N}$ such that
$$N\ge \prod_{i=1}^n\mathrm{rist}_L(v_i)',$$
(for example by choosing $n$ vertices at the same level of $T_{w_N}$ and considering one moved by $L$ in each of the corresponding subtrees). By \cite[Lemma 2.17]{MaximalDominik} (note that Francoeur's proof does not make use of level-transitivity, so his proof also holds for our subgroup $L$) each non-trivial rigid stabilizer $\mathrm{rist}_L(v_i)$ is not solvable. In particular, for each $1\le i\le n$, we may find non-commuting elements
$$a_i,b_i\in \mathrm{rist}_L(v_i)'\le N.$$
Therefore, for every $1\le i,j\le n$, we have
$$a_ia_j=a_ja_i\quad \text{and}\quad b_ib_j=b_jb_i,$$
and 
$$a_ib_j=b_ja_i\quad\text{if and only if}\quad i\ne j;$$
i.e. we obtain a non-commuting representation of $V_n$ in $N$.
\end{proof}

% End of document	

% References

\bibliographystyle{unsrt}

\begin{thebibliography}{1}

\bibitem{Abert}
M. Abért, Group laws and free subgroups in topological groups, \textit{Bull. Lond. Math. Soc.} \textbf{37} (2005), 525--534.

\bibitem{AbertGraph}
M. Abért, Representing graphs by the non-commuting relation, \textit{Publ. Math. Debrecen} \textbf{69 (3)} (2006), 261--269.

\bibitem{AbertVirag}
M. Abért and B. Virág, Dimension and randomness in groups acting on rooted trees, \textit{J. Amer. Math. Soc.} \textbf{18} (2005), 157--192.

\bibitem{BarneaShalev}
Y. Barnea and A. Shalev, Hausdorff dimension, pro-$p$ groups, and Kac-Moody algebras, \textit{Trans. Amer. Math. Soc.} \textbf{349} (1997), 5073--5091.

\bibitem{BartholdiHausdorff}
L. Bartholdi, Branch rings, thinned rings, tree enveloping rings, \textit{Israel J. Math.} \textbf{158} (2006), 93--139.

\bibitem{TitsAlternative}
 E. Breuillard and T. Gelander, A topological Tits alternative, \textit{Ann. Math.} \textbf{166} (2007), 427--474.

 \bibitem{Bridy}
A. Bridy, R. Jones, G. Kelsey, and R. Lodge, Iterated monodromy groups of rational functions and periodic points over finite fields, \textit{Math. Ann.} \textbf{390(1)} (2024), 439--475.

\bibitem{BostonNH}
N. Boston, $p$-adic Galois representations and pro-$p$ Galois groups, in:
\textit{New Horizons in Pro-p Groups}, Birkhäuser Boston, MA \textbf{1}, 2000.

\bibitem{Boston}
N. Boston, Some cases of the Fontaine-Mazur conjecture, II, \textit{J. Number Theory} \textbf{75 (2)} (1999), 161--169.


\bibitem{Calegari}
F. Calegari, Even Galois representations and the Fontaine-Mazur conjecture, \textit{Invent. Math.} \textbf{185} (2011), 1--16.

 \bibitem{MicroSupported1}
P.-E. Caprace and A. Le Boudec (with an appendix by D. Francoeur), Commensurated subgroups and micro-supported actions, \textit{J. Eur. Math. Soc.} \textbf{25} (2023), 2251--2294.

 \bibitem{MicroSupported2}
P.-E. Caprace, C.\,D. Reid and G.\,A. Willis, Locally normal subgroups of totally disconnected groups. Part II: Compactly generated simple groups, \textit{Forum Math. Sigma} \textbf{5} (2017), art. e12, 89 pp.

\bibitem{QuestionAbertVirag}
J. Fariña-Asategui, On a question of Abért and Virág, arXiv preprint: 2505.23142.

\bibitem{RestrictedSpectra}
J. Fariña-Asategui, Restricted Hausdorff spectra of $q$-adic automorphisms, \textit{Adv. Math.} \textbf{472} (2025), 110294.

\bibitem{JorgeSantiFPP}
J. Fariña-Asategui and S. Radi, On the fixed-point proportion of self-similar groups, arXiv preprint: 2503.00185.

\bibitem{Ferraguti}
A. Ferraguti, A. Ostafe and U. Zannier, Cyclotomic and abelian points in backward orbits of rational functions, \textit{Adv. Math.} \textbf{438} (2024), 109463.

\bibitem{FontaineMazur}
J.-M. Fontaine and B. Mazur, Geometric Galois representations, \textit{Elliptic Curves and Modular Forms}, Proceedings of a conference held in Hong Kong, December 18--21 (1993), International Press, 1997.

\bibitem{MaximalDominik}
D. Francoeur, On maximal subgroups of infinite index in branch and weakly branch groups, \textit{J. Algebra}, \textbf{560}
(2020), 818--851. 

\bibitem{pro-c} 
A. Garrido and J. Uria-Albizuri, Pro-$\mathcal{C}$ congruence properties for groups of rooted tree automorphisms, \textit{Arch. Math.} \textbf{112} (2019), 123--137.


\bibitem{JonesAMS}
R. Jones, Fixed-point-free elements of iterated monodromy groups, \textit{Trans. Amer. Math. Soc.} \textbf{367 (3)} (2015), 2023--2049.

\bibitem{JonesArboreal}
R. Jones, Galois representations from pre-image trees: an arboreal survey, \textit{Publications mathématiques de Besançon. Algèbre et théorie des nombres} (2013), 107--136.

\bibitem{JonesComp}
R. Jones, Iterated Galois towers, their associated martingales, and the $p$-adic Mandelbrot set, \textit{Compos. Math.} \textbf{143 (5)} (2007), 1108--1126. 

\bibitem{JonesLMS}
R. Jones, The density of prime divisors in the arithmetic dynamics of quadratic polynomials, \textit{J. Lond. Math. Soc.} \textbf{78 (2)} (2008), 523--544.


\bibitem{Juul}
J. Juul, P. Kurlberg, K. Madhu, and T.\,J. Tucker, Wreath products and proportions of periodic points, \textit{Int. Math. Res. Not. IMRN} \textbf{13} (2016), 3944--3969.

\bibitem{KionkeSchesler}
S. Kionke and E. Schesler, On representations of direct products and the bounded generation property of branch groups, \textit{Arch. Math.} \textbf{120} (2023), 449--455.

\bibitem{Kisin}
M. Kisin, The Fontaine-Mazur conjecture for GL2, \textit{J. Amer. Math. Soc.} \textbf{22 (3)} (2009), 641--690.

\bibitem{Odoni1}
R.\,W.\,K. Odoni, On the prime divisors of the sequence $w_{n+1} = 1 + w_1 \dots w_n$, \textit{J. Lond. Math. Soc.} \textbf{32(2)} (1980), 1--11.

\bibitem{Odoni2}
R.\,W.\,K. Odoni, The Galois theory of iterates and composites of polynomials, \textit{Proc. Lond. Math. Soc.}  \textbf{51 (3)} (1985), 385--414.

\bibitem{Pan}
L. Pan, The Fontaine-Mazur conjecture in the residually reducible case, \textit{J. Amer. Math. Soc.} \textbf{35 (4)} (2022), 1031--1169.

\bibitem{SkinnerWiles}
C.\,M. Skinner and A.\,J. Wiles, Residually reducible representations and modular forms, \textit{Publ. Math. Inst. Hautes Études Sci.} \textbf{89} (1999), 5--126.






\end{thebibliography}

\end{document}